\theoremstyle{plain}
\newtheorem{thm}{Theorem}
\theoremstyle{definition}
\newtheorem*{conj}{Conjecture}
\newtheorem*{Rem}{Remark}
\newtheorem{lem}{Lemma}
\newtheorem{cor}{Corollary}
\newcommand{\comment}[1]{}
\DeclareMathAlphabet      {\mathbfit}{OML}{cmm}{b}{it}
\newcommand{\R}{\ensuremath{\mathbb{R}}}
\begin{document}
\vspace*{-0.9cm}
\title{There exist no minimally knotted planar spatial graphs on the torus}
\author{Senja Barthel}
\address{Department of Mathematics, Imperial College London, London, SW7 2AZ, United Kingdom}
\email{s.barthel11@imperial.ac.uk}

\begin{abstract}
We show that all nontrivial embeddings of planar graphs on the torus contain a nontrivial knot or a nonsplit link. This is equivalent to showing that no minimally knotted planar spatial graphs on the torus exist that contain neither a nontrivial knot nor a nonsplit link all of whose components are unknots.
\end{abstract}

\maketitle

\section{Introduction}
\label{intro}
All considered graphs are undirected finite graphs and we will work in the piecewise linear category. A \textbf{graph embedding} is an embedding~$f:G\rightarrow  S^3$ of a graph~$G$ in $ S^3$ up to ambient isotopy and the corresponding \textbf{spatial graph}~$\mathcal{G}$ is the image of this embedding. A graph~$G$ is \textbf{planar} if there exists an embedding $f: G \rightarrow  S^2$. An embedding $f:G \rightarrow S^3$ is \textbf{trivial} if $\mathcal{G}$ is contained in a 2-sphere embedded in $S^3$. Its image~$\mathcal{G}$ is a \textbf{trivial spatial graph}. A spatial graph~$\mathcal{G}$ is \textbf{minimally knotted} if $\mathcal{G}$ is nontrivial but $\mathcal{G} - e$ is trivial for every edge $e$. Some authors call minimally knotted spatial graphs \textbf{almost trivial}, \textbf{almost unknotted} or \textbf{Brunnian}. In this paper, a nontrivial \textbf{link} is a nonsplit link with at least  two components.\\
Previous research on minimally knotted spatial graphs has been undertaken: The first example of a minimally knotted spatial graph was an embedding of a handcuff graph given by Suzuki~\cite{Suzuki1}. Kawauchi~\cite{Kawauchi4}, Wu~\cite{Wu2} and Inaba and Soma~\cite{InabaSoma} showed that every planar graph has a minimally knotted embedding. Ozawa and Tsutsumi~\cite{OzawaTsutsumi} proved that minimally knotted embeddings of planar graphs are totally knotted. Especially minimally knotted $\theta _n$-graphs have generated some interest. Kinoshita~\cite{Kinoshita} gave the first example of a minimally knotted $\theta _{3}$-graph (see Fig.~1) which Suzuki~\cite{Suzuki2} generalised to give examples of minimally knotted $\theta _{n}$-graphs for all $n \geq 3$. Closely related are \textbf{ravels} which are nontrivial embeddings of $\theta _{n}$-graphs that contain no nontrivially knotted subgraph; this definition is equivalent to the one given by Farkas, Flapan and Sullivan~\cite{FFS}. The concept of ravels has been introduced by Castle, Evans and Hyde~\cite{Ravels} as local entanglements that are not caused by knots or links and may lead to new topological structures in coordination polymers.
A ravel in a molecule has been synthesized by Lindoy~\textit{et al}~\cite{Lindoy}.
Castle, Evans and Hyde~\cite{Hyde} conjectured the following:

\begin{conj}[Castle, Evans, Hyde~\cite{Hyde}]
All nontrivial embeddings of planar graphs on the torus include a nontrivial  knot or a nonsplit link.
\end{conj}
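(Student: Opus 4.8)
The plan is to prove the contrapositive: if a planar spatial graph $\mathcal{G}$ lying on a standardly embedded torus $T\subset S^3$ contains neither a nontrivial knot nor a nonsplit link, then $\mathcal{G}$ is trivial. Write $T=\bdry V_1=\bdry V_2$, where $V_1,V_2$ are the two solid tori into which $T$ splits $S^3$, and let $\mu,\lambda$ be the meridian of $V_1$ and the longitude, so that $H_1(T)\cong\mathbb{Z}^2$ is generated by $[\mu]=(0,1)$ and $[\lambda]=(1,0)$. Two facts drive everything. \emph{Compression}: if some essential simple closed curve $\gamma\subset T\setminus\mathcal{G}$ is a meridian of $V_1$ or of $V_2$, then $\gamma$ bounds a disk in the corresponding solid torus, and surgering $T$ along that disk turns $T$ into a $2$-sphere still containing $\mathcal{G}$, so $\mathcal{G}$ is trivial. \emph{Homological constraints}: an essential cycle of $\mathcal{G}$ is a simple closed curve of some primitive slope $(p,q)$, hence the $(p,q)$-torus knot, so the no-knot hypothesis forces $\min(|p|,|q|)\le 1$; and two \emph{disjoint} essential cycles are parallel on $T$, hence of equal slope $(p,q)$ and of linking number $pq$, so the no-link hypothesis forces $pq=0$. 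One may also assume $\mathcal{G}$ connected, since cycles in distinct components are disjoint and are thus constrained by the second fact.

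First I would dispose of the case where $T\setminus\mathcal{G}$ contains an essential simple closed curve $\gamma$, of slope $(p,q)$. Then $\mathcal{G}$ lies in the annulus $A=T\setminus\nbhd(\gamma)$ with core slope $(p,q)$. If $\mathcal{G}$ has no cycle essential in $A$ it lies in a disk and is trivial. Otherwise it has an essential cycle of slope $(p,q)$, so by the no-knot constraint $(p,q)$ has a coordinate in $\{-1,0,1\}$. If $(p,q)$ is $\pm\mu$ or $\pm\lambda$, compression applies. The remaining possibility is $(p,q)=(1,n)$ or $(n,1)$ with $n\neq0$, so that $A$ is an $n$-twisted unknotted band; here no-link forbids two disjoint essential cycles, so all essential cycles of $\mathcal{G}$ pairwise intersect, and one then checks that $\mathcal{G}$ can be isotoped in $S^3$ to untwist the band, again placing it on a $2$-sphere. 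Thus whenever $\mathcal{G}$ misses some essential curve, it is trivial.

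This leaves the \emph{filling} case, in which $\mathcal{G}$ meets every essential simple closed curve of $T$; equivalently every complementary face is a disk and $V-E+F=0$. Compression is now unavailable, and the example of the wedge $\mu\vee\lambda$ (filling, yet trivial because its two loops bound disks $D_1\subset V_1$, $D_2\subset V_2$ whose neighborhood has a $2$-sphere boundary carrying the graph) shows the sphere must be built by hand. The intended mechanism is a capping argument: each meridional cycle bounds a disk in $V_1$ and each longitudinal cycle a disk in $V_2$, and I would select, face by face, such a family together with the disk faces on $T$ so that their union is a contractible complex carrying $\mathcal{G}$ on the boundary of a regular neighborhood, hence a $2$-sphere. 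The no-knot hypothesis guarantees the relevant cycles bound disks, and no-link guarantees that disjoint cycles bound \emph{disjoint} disks inside one solid torus, which is what lets the disks be assembled without intersections.

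The main obstacle is precisely this filling case, and it is where planarity of $G$ is indispensable: a general graph admits ravels, which are nontrivial fillings with no knotted cycle and no link, so the argument must use that $G$ embeds in $S^2$ to exclude them. The concrete difficulties are (i) organizing the capping disks from $V_1$ and $V_2$ into a tree-like, hence contractible, configuration rather than one carrying extra homology, and (ii) controlling essential cycles of slope $(1,n)$ with $|n|\ge 2$, whose spanning disks must wander through both solid tori. I expect to handle (ii) by the same ``no-link forces a single essential strand'' idea as in the annular case, and to attack (i) by comparing the toroidal embedding with a planar one, using $V-E+F=0$ against $V-E+F'=2$ to isolate the extra handle of $T$ and then sliding $\mathcal{G}$ off it; alternatively, after reducing to a minimally knotted $\mathcal{G}$ one can invoke the total knottedness of Ozawa--Tsutsumi and instead seek a compressing disk for $\bdry\nbhd(\mathcal{G})$ coming from a face. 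Turning this combinatorial face data into an explicit ambient isotopy onto a $2$-sphere is the crux of the proof.
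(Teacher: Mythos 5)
Your reduction and your non-filling case are broadly reasonable, but the proposal has a genuine gap exactly where you admit it does: the filling case is never proved, only described as a hoped-for ``capping argument'' with acknowledged open difficulties (i) and (ii). That case is the actual content of the theorem --- it is where the nontrivial knotless, linkless embeddings of $K_5$ and $K_{3,3}$ live, so planarity must do real work there --- and so the proposal as written is a plan, not a proof. Two smaller problems: the statement's ``torus'' includes nonstandardly embedded (knotted) tori, which you never address (the paper's Lemma~1 disposes of this quickly: any longitudinal cycle would be knotted, so the graph misses a meridian and lies in a twice-punctured sphere); and in your $(1,n)$-band case the claim that one can ``isotope $\mathcal{G}$ to untwist the band'' is not right as stated --- no ambient isotopy untwists an annulus, since the linking number of its boundary components is an invariant; what must be shown is that the graph, not the band, can be slid onto a sphere. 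Your justification that planarity is indispensable is also off: ravels are embeddings of \emph{planar} $\theta_n$-graphs and (by the paper's Corollary) never lie on a torus at all; the examples forcing the planarity hypothesis are the toroidal embeddings of the nonplanar graphs $K_5$ and $K_{3,3}$.

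For comparison, the paper never builds the $2$-sphere by hand, and that is how it escapes your difficulty (i). It contracts a spanning tree of $\mathcal{G}$ within $T^2$ to get a bouquet on the torus; the loops of a knotless bouquet on $T^2$ are unknotted torus curves meeting pairwise in one point, and the intersection formula $pq'-qp'=\pm 1$ shows that, besides inessential loops, only the types $T(0,1)$, $T(1,n)$, $T(1,n+1)$ (or their mirror-symmetric counterparts) can coexist, so the bouquet is trivial and hence the embedding is \emph{primitive}. Ozawa--Tsutsumi's criterion then gives that $f|_{G'}$ is free for every connected subgraph $G'$, the connectivity lemma (using the no-nonsplit-link hypothesis) extends freeness to all subgraphs, and Scharlemann--Thompson's theorem --- an embedding is trivial if and only if the graph is planar and every subgraph restriction is free --- concludes. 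Planarity enters only through Scharlemann--Thompson, whose content is precisely the compressing-disk construction you were trying to reinvent face by face. If you want to complete your outline, the efficient route is to prove primitivity (your filling graphs contract to bouquets on $T^2$, so your slope analysis essentially suffices for that) and then quote these two theorems, rather than assembling capping disks into a contractible complex by hand.
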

With Theorem~\ref{goal} we prove that their conjecture is true. With \textbf{torus} we refer to an embedded torus in the \mbox{3-sphere}~$S^3$ which may be nonstandardly embedded. A \textbf{standardly embedded torus} is a torus that bounds two solid tori in~$S^3$. A nonstandardly embedded torus still bounds a solid torus in $S^3$ by the Solid Torus Theorem~\cite{Alexander}. 
\begin{thm}[Knots and links existence] \label{goal} 
Let $G$ be a planar graph and $f:G\rightarrow  S^3$ be an embedding of $G$ with image~$\mathcal{G}$. \phantom{}
If $\mathcal{G}$ is contained in the  torus~$T^2$ and contains neither a nontrivial knot nor a nonsplit link, \\then $f$ is trivial.
\end{thm}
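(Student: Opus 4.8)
The plan is to set up the proof by analyzing how the spatial graph $\mathcal{G}$ sits inside the torus $T^2$, and to reduce triviality of the embedding to the structure of the complementary regions. Since $T^2$ bounds a solid torus $V$ in $S^3$ by the Solid Torus Theorem, I would first study the curves and arcs of $\mathcal{G}$ on $T^2$. The key dichotomy is whether the edges and cycles of $\mathcal{G}$ are contractible on the torus or not. Because $\mathcal{G}$ contains no nontrivial knot and no nonsplit link, every cycle of $\mathcal{G}$, viewed as a knot in $S^3$, must be unknotted, and every pair (or collection) of disjoint cycles must form a split (hence trivial) link. The first main step is therefore to rule out that $\mathcal{G}$ carries any essential curve on $T^2$: if a cycle of $\mathcal{G}$ were isotopic on $T^2$ to a $(p,q)$-curve with $q \neq 0$ (a longitude-type curve going around the solid torus), it would either be knotted in $S^3$ (when $T^2$ is knotted) or, together with a parallel essential cycle, form a nonsplit link such as a Hopf link or a $(p,q)$-torus link. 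This contradicts the hypotheses, so I expect to conclude that all cycles of $\mathcal{G}$ bound disks on $T^2$, i.e.\ $\mathcal{G}$ lies in a disk $D \subset T^2$ up to the complementary structure.

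The second step is to leverage planarity. Since $G$ is planar, it has an abstract planar embedding; the goal is to show the given spatial embedding is ambient isotopic to one inside an embedded $S^2$. Having established that $\mathcal{G}$ contains no essential torus curve, I would argue that $\mathcal{G}$ is contained in a subdisk of $T^2$, or more precisely that there is an embedded $2$-sphere in $S^3$ containing $\mathcal{G}$. Concretely, if all cycles are contractible on $T^2$, I would cut $T^2$ along a suitable essential simple closed curve disjoint from $\mathcal{G}$ to obtain an annulus, and then cap or retract to place $\mathcal{G}$ into a planar surface. The subtlety is that $\mathcal{G}$ need not be connected, and a nonsplit link constraint must be used to control how distinct connected components or distinct contractible cycles are nested or linked through the solid torus; I would use the no-nonsplit-link hypothesis (including links of unknots, as flagged in the abstract) to show that the contractible pieces can be simultaneously unknotted and unlinked.

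The third step addresses the possibility that the torus itself is knotted in $S^3$. Even if $\mathcal{G}$ lies in a contractible disk region of $T^2$, the disk could sit in a knotted solid torus. Here the crucial point is that a contractible curve on $T^2$ bounds a disk \emph{on the torus}, and that disk region can be pushed off to a genuinely flat disk in $S^3$ regardless of how $T^2$ is knotted, because the disk region of $T^2$ is unknotted as an embedded disk. Thus the knottedness of $T^2$ becomes irrelevant once $\mathcal{G}$ avoids all essential curves: I can find an embedded $2$-sphere $S$ in $S^3$ that contains the disk region carrying $\mathcal{G}$, witnessing triviality of $f$.

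I expect the main obstacle to be the second step: carefully converting the topological statement ``every cycle is contractible on $T^2$'' into the geometric statement ``$\mathcal{G}$ is contained in an embedded $2$-sphere,'' especially handling graphs whose complementary regions on $T^2$ are themselves nontrivial and handling multiple connected components without inadvertently creating a link that the hypotheses forbid. The delicate bookkeeping is to ensure that the system of compressing disks (on $T^2$ and bounding the contractible cycles) can be chosen disjointly and coherently, so that a single $2$-sphere captures all of $\mathcal{G}$ at once. The essential-curve exclusion of the first step and the knottedness-irrelevance of the third step are comparatively clean consequences of the no-knot and no-nonsplit-link hypotheses, but assembling them into a global triviality conclusion is where the real work lies.
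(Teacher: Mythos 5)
Your first step contains a fatal error, and the rest of the plan depends on it. The no-nonsplit-link hypothesis only constrains \emph{disjoint} cycles of $\mathcal{G}$; two cycles that share a vertex or an edge never form a link, so nothing forces them apart. Consequently the hypotheses do \emph{not} imply that every cycle of $\mathcal{G}$ is contractible on $T^2$. Concretely, take a $\theta$-graph embedded on the standard torus so that its three cycles have slopes $(1,0)$, $(0,1)$ and $(1,1)$ (or, as in the paper's Bouquet Lemma, a bouquet with loops of types $T(0,1)$, $T(1,n)$, $T(1,n+1)$): this spatial graph is planar, contains no nontrivial knot and no link whatsoever (its cycles pairwise intersect), yet it carries essential curves of $T^2$ and lies in no disk of the torus. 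Your claimed conclusion of step one --- ``all cycles of $\mathcal{G}$ bound disks on $T^2$'' --- is therefore false, and steps two and three, which push a disk region of $T^2$ into a $2$-sphere, never address precisely these graphs. Yet these are the hard case: such embeddings \emph{are} trivial, but showing this is the real content of the theorem.

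There is also a deeper structural problem: even where your cycles are all contractible, ``every cycle unknotted and every sublink split'' does not by itself imply triviality of a spatial graph --- Kinoshita's $\theta$-curve satisfies both conditions and is nontrivial. So any correct proof must exploit the torus in a way that goes beyond checking knots and links of cycles. The paper does this by contracting a spanning tree to get a bouquet on $T^2$, using the intersection-number constraint (Rolfsen's $pq'-qp'=\pm 1$) to show such a knot-free bouquet is trivial, hence the embedding is \emph{primitive}; then Ozawa--Tsutsumi's criterion upgrades primitivity to freeness of the complements of all connected subgraphs, the Connectivity Lemma (using the no-link hypothesis) handles disconnected subgraphs, and Scharlemann--Thompson's planarity criterion converts planarity plus freeness into triviality. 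Your proposal contains no substitute for this machinery; the ``delicate bookkeeping'' you defer to step two is exactly where a direct isotopy argument breaks down.
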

Since~$\theta _{n}$-graphs are planar, it follows from Theorem~\ref{goal} that on the torus there exist no minimally knotted embeddings of~$\theta _{n}$-graphs with $n>2$. This gives us the following:
\vskip 10pt
\noindent
\begin{cor}[Ravels do not embed on the torus]\label{Cor}
Every nontrivial embedding of $\theta_{n}$-graphs on the torus contains a nontrivial knot.
\end{cor}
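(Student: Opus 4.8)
The plan is to reduce the corollary to the main theorem. We want to show that every nontrivial embedding of a $\theta_n$-graph on the torus contains a knot. Since $\theta_n$-graphs are planar, Theorem~\ref{goal} applies directly, so the strategy is to argue by contrapositive: if such an embedding contained no nontrivial knot, I would invoke Theorem~\ref{goal} to conclude triviality, contradicting the assumed nontriviality.

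First I would note that the corollary phrases the conclusion as ``contains a knot,'' whereas Theorem~\ref{goal} produces either a nontrivial knot \emph{or} a nonsplit link. Thus the key step is to rule out the link alternative for $\theta_n$-graphs. I would observe that a $\theta_n$-graph has exactly two vertices, each of valence $n$, joined by $n$ parallel edges, so every cycle in $\theta_n$ passes through both vertices and any two distinct cycles share both vertices. Consequently no two disjoint cycles exist in a $\theta_n$-graph, and therefore a spatial $\theta_n$-graph contains no two disjoint closed curves that could form a nonsplit multicomponent link. This forces the ``nonsplit link'' alternative in Theorem~\ref{goal} to be vacuous for these graphs, leaving only the nontrivial-knot possibility.

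With that observation in hand, the argument closes cleanly. Suppose $f$ is a nontrivial embedding of $\theta_n$ with $n>2$ on the torus $T^2$, and suppose for contradiction that $\mathcal{G}$ contains no knot, that is, no nontrivially knotted cycle. Because $\mathcal{G}$ contains no disjoint pair of cycles, it contains no nonsplit link either. Then the hypotheses of Theorem~\ref{goal} are satisfied, and we conclude that $f$ is trivial, contradicting our assumption. Hence $\mathcal{G}$ must contain a knot.

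The main obstacle is simply making precise and rigorous the claim that a $\theta_n$-graph admits no two disjoint cycles, and hence that a spatial $\theta_n$-graph can carry no genuine multicomponent nonsplit link; this is a purely combinatorial fact about the graph $\theta_n$, but it must be stated carefully so that the reduction to Theorem~\ref{goal} is valid. Once that combinatorial observation is secured, the remainder is a direct application of the main theorem together with the planarity of $\theta_n$-graphs.
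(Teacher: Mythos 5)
Your proposal is correct and follows essentially the same route as the paper's own proof: observe that a $\theta_n$-graph contains no pair of disjoint cycles, hence no nonsplit link, and then apply Theorem~\ref{goal} using planarity of $\theta_n$-graphs. The paper states this in two sentences; your version merely fills in the combinatorial detail (every cycle passes through both vertices) and phrases the reduction as a contrapositive, which is the same argument.
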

We conclude by showing that all assumptions of Theorem~\ref{goal} are necessary. Explicit ambient isotopies that transform spatial graphs that fulfil the assumptions of Theorem~\ref{goal} into the plane~$\R^2$, are given in~\cite{explicit}. Another consequence  of Theorem~\ref{goal} that is stated in the remark has been shown in~\cite{Hyde} together with~\cite{chirality}:  Nontrivial \mbox{3-connected} and simple planar spatial graphs that are embedded on a torus are chiral. A graph is \textbf{simple} if it contains no loops and no multi-edges. It is \textbf{3-connected} if at least three vertices and their incident edges have to be deleted to decompose the graph or to reduce it to a single vertex. A spatial graph is \textbf{chiral} if it is not ambient isotopic to its mirror image.

\section{Proof of Theorem~\ref{goal}}

\subsection{Outline of the proof}
The proof uses two theorems of Scharlemann, Thompson~\cite{ScharlemannThompson} and Ozawa, Tsutsumi~\cite{OzawaTsutsumi}. We assume that the spatial graph~$\mathcal{G}$ we consider is given by an embedding $f:G \rightarrow T^2$ of a planar graph~$G$ and furthermore that $\mathcal{G}$ contains no nontrivially knotted or linked subgraph. We conclude that $\mathcal{G}$ must be trivial. During the proof, we need the following two definitions:
\vskip 10pt
\noindent
\textbf{Definition~1.}
\textit{An embedding $f:G\rightarrow  S^3$ of a graph $G$ is \textbf{primitive}, if for each component $G_i$ of $G$ and any spanning tree $T_i$ of $G_i$, the bouquet graph $f(G_{i})/f(T_{i})$ obtained from $f(G_i)$ by contracting all edges of $f(T_i)$ in $ S^3$ is trivial.}
\vskip 10pt
\noindent
\textbf{Definition~2.}
\textit{An embedding $f:G\rightarrow  S^3$ of a graph $G$ is \textbf{free}, if the fundamental group of $ S^{3}-f(G)$ is free.}
\vskip 10pt
\noindent
The argument of the proof is as follows: We start showing that the statement is true for nonstandardly embedded tori in Lemma~\ref{nonStandard}. With Lemma~\ref{conn} we argue that it is sufficient to consider connected graphs. Then we show in Lemma~\ref{bouquet} that a bouquet graph on $T^2$ either contains a nontrivial knot or is trivial. Since any connected spatial graph~$\mathcal{G}$ on $T^2$ contracts to a bouquet graph on $T^2$, it follows that $\mathcal{G}$ is primitive if it contains no nontrivial knot. By Theorem~\ref{free} we know that the restriction $f|_{G'}$ is free for all connected subgraphs $G'$ of $G$. Applying Lemma~\ref{conn} to the subgraphs $G''$ of $G$ that are not connected, we see that $f|_{G_s}$ is free for all subgraphs $G_s$ of $G$. Using Theorem~\ref{plane} we conclude that $\mathcal{G}$ is trivial.

\subsection{Preparations for the proof}
\begin{lem}[Nonstandardly embedded torus] \label{nonStandard}
Let $\mathfrak{T}^2$ be a torus that is not standardly embedded. Any spatial graph~$\mathcal{G}$ that is embedded in $\mathfrak{T}^2$ and that contains no nontrivial knot is trivial.
\end{lem}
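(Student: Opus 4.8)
The plan is to exploit the fact that a nonstandardly embedded torus $\mathfrak{T}^2$ is knotted, and hence the solid torus it bounds (guaranteed by the Solid Torus Theorem) is a knotted solid torus whose core is a nontrivial knot. First I would invoke the Solid Torus Theorem to fix a solid torus $V$ with $\partial V = \mathfrak{T}^2$, and I would let $k$ denote the core curve of $V$. Since $\mathfrak{T}^2$ is not standardly embedded, $k$ is a nontrivial knot in $S^3$; this is the essential geometric input that distinguishes the nonstandard case from the standard one.

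The heart of the argument is to find, inside $\mathcal{G} \subset \mathfrak{T}^2$, a cycle that is isotopic to the knotted core $k$, which would contradict the hypothesis that $\mathcal{G}$ contains no nontrivial knot. The natural candidate is a curve on $\mathfrak{T}^2$ that runs longitudinally, i.e.\ a curve representing a generator of $H_1(V)$ and thus isotopic to $k$ in $V$. So the key step is a dichotomy for how the spatial graph sits on the torus. I would argue that either $\mathcal{G}$ lies inside a disk on $\mathfrak{T}^2$ (equivalently, $\mathcal{G}$ does not separate $\mathfrak{T}^2$ into disk regions and contains no essential cycle), in which case $\mathcal{G}$ is trivial because it bounds into a 2-sphere just as in the planar case; or else $\mathcal{G}$ contains a cycle that is essential on $\mathfrak{T}^2$. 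In the latter case I would analyse the homotopy class of that essential cycle on the torus.

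The subtlety, and the step I expect to be the main obstacle, is that an essential cycle on $\mathfrak{T}^2$ need not be longitudinal: it could be a meridian of $V$ (bounding a disk in $V$, hence unknotted) or a curve of slope $(p,q)$ with $q \neq 0$. I would need to verify that any essential simple cycle on a knotted solid torus boundary that has nonzero longitudinal winding is itself a nontrivial knot, or alternatively reduce to a longitude. The clean way to handle this is: if every essential cycle of $\mathcal{G}$ is a meridian of $V$, then all such cycles bound disks in $V$ and one can push $\mathcal{G}$ into a standard position to see it is trivial; but if some essential cycle has nonzero longitudinal component, then because $\mathfrak{T}^2$ is knotted that cycle is a nontrivial knot (a curve on the boundary of a knotted solid torus with nonzero longitudinal winding is a nontrivial knot, since it is isotopic in $V$ to a nonzero multiple of the knotted core, and for a simple closed curve this multiple is $\pm 1$ up to the meridional coordinate), contradicting the hypothesis.

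Thus the remaining case is that $\mathcal{G}$ has no essential cycle on $\mathfrak{T}^2$, meaning $\mathcal{G}$ is contained in a disk $D \subset \mathfrak{T}^2$. Since a disk embedded in $\mathfrak{T}^2 \subset S^3$ is an unknotted disk (it lies in a 2-sphere), the embedding $f$ factors through a 2-sphere and $\mathcal{G}$ is trivial by definition. Combining the two cases, either $\mathcal{G}$ contains a nontrivial knot (excluded by hypothesis) or $\mathcal{G}$ is trivial, which completes the proof. I expect the careful statement and justification of the "essential cycle with nonzero longitudinal winding is knotted" claim to require the most attention, since it is where the nonstandard embedding of the torus is actually used.
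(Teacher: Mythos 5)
Your overall strategy is the same as the paper's: use the knottedness of $\mathfrak{T}^2$ to show that any cycle of $\mathcal{G}$ with nonzero longitudinal winding would be a nontrivial knot, and then conclude that the remaining graph sits in a planar piece of the torus. However, two of your steps do not hold up as written. The more serious one is your justification of the key knottedness fact. You assert that a simple closed curve on $\partial V$ with nonzero longitudinal winding ``is isotopic in $V$ to a nonzero multiple of the knotted core, and for a simple closed curve this multiple is $\pm 1$ up to the meridional coordinate.'' That is false: the $(2,3)$-curve on $\partial V$ is embedded, has winding number $2$, and is not isotopic to the core --- it is a cable of it. The fact you need is still true, but for winding number $|p|\geq 2$ it requires the standard (and not formally trivial) result that a $(p,q)$-cable of a nontrivial knot is nontrivial; only for $|p|=1$ is the curve isotopic to the core. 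To be fair, the paper asserts the same fact with no more justification than ``since $\mathfrak{T}^2$ itself is knotted,'' but your proposed reasoning, taken literally, is incorrect rather than merely terse, and you yourself identified this as the crux.

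The second gap is your case in which essential cycles of $\mathcal{G}$ exist but are all meridians: ``one can push $\mathcal{G}$ into a standard position to see it is trivial'' is not an argument. The paper's move here is concrete and covers your disk case and your meridian case uniformly: since no cycle of $\mathcal{G}$ winds longitudinally, there is a meridian $m$ of $\mathfrak{T}^2$ disjoint from $\mathcal{G}$; cutting $\mathfrak{T}^2$ along $m$ leaves an annulus containing $\mathcal{G}$, and capping its two boundary circles with parallel copies of the meridian disk of $V$ produces an embedded $2$-sphere in $S^3$ containing $\mathcal{G}$, so $\mathcal{G}$ is trivial by definition. Note that even the existence of such a disjoint meridian deserves a word of care (a null-homotopic cycle can spiral around the longitude direction so as to meet every ``straight'' meridian, so the disjoint meridian must be found inside a complementary region that is not a disk, using that the image of $H_1(\mathcal{G})$ in $H_1(\mathfrak{T}^2)$ lies in the meridional subgroup); your disk case also quietly assumes $\mathcal{G}$ is connected, since otherwise you only get a disjoint union of disks. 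Both gaps are repairable, and with those repairs your proof becomes essentially the paper's proof.
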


\begin{proof}
If the spatial graph~$\mathcal{G}$ contains a cycle that follows a longitude of the torus~$\mathfrak{T}^2$, this cycle is knotted since $\mathfrak{T}^2$ itself is knotted. Therefore, no such subgraph of~$\mathcal{G}$ can exist and we find a meridian~$m$ of~$\mathfrak{T}^2$ that has no intersection with~$\mathcal{G}$. This shows that~$\mathcal{G}$ in embedded in the twice punctured sphere $\mathfrak{T}^{2}-m \simeq  S^2 - \{p_{1}, p_{2}\}$. Therefore, $\mathcal{G}$ is trivial.
\end{proof}

It follows from Lemma~\ref{nonStandard} that the statement of Theorem~\ref{goal} is true for nonstandardly embedded tori. Therefore, we only consider the standardly embedded torus~$T^2$ from now on which saves us from considering different cases.

\begin{lem}[Connectivity Lemma] \label{conn}
The image $\mathcal{G}$ of an embedding $f:G\rightarrow T^{2}\subset  S^3$ of a graph $G$ with $n >1$ connected components on the standard torus $T^2$ contains either a nonsplit link, or contains no nonsplit link and decomposes into $n$ disjoint components of which at least $n-1$ components are trivial.
\end{lem}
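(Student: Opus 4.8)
The plan is to analyze how the connected components of $\mathcal{G}$ sit on the standard torus $T^2$, exploiting the fact that disjoint connected subgraphs on $T^2$ must occupy complementary regions. First I would observe that since $T^2$ is standardly embedded, it bounds a solid torus $V$ on each side. Each connected component $\mathcal{G}_i$ of $\mathcal{G}$ lies in $T^2$, and I would consider whether any component contains a cycle that is essential on the torus, i.e. nontrivial in $\pi_1(T^2) \cong \mathbb{Z}^2$. The key dichotomy to set up is: either some component carries an essential cycle, or every component is contained in a disk on $T^2$.

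The main case analysis proceeds as follows. If two distinct components $\mathcal{G}_i$ and $\mathcal{G}_j$ each contain essential cycles, then these cycles are disjoint simple closed curves on $T^2$; two disjoint essential simple closed curves on the torus are parallel and represent the same primitive homology class, so they cobound an annulus. Using one curve as the core of the solid torus on one side and the other as a longitude-type curve, I would argue that this pair of curves forms a nonsplit link (a pair of parallel $(p,q)$-curves on the standard torus is a nontrivial link whenever the slope is nonmeridional, and when it is meridional one gets the Hopf-type linking with the core). This yields the first alternative: $\mathcal{G}$ contains a nonsplit link. If instead at most one component carries an essential cycle, then all remaining $n-1$ components lie in disks on $T^2$, and a spatial graph contained in a disk in $S^3$ is trivial. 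This would establish that at least $n-1$ components are trivial.

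The hard part will be making the linking argument precise and ruling out the degenerate configurations. In particular, I must handle the case where the ``essential cycles'' in two components are parallel to meridians (so that each bounds a disk inside the solid torus $V$ but is essential on $T^2$); here I need to verify that two parallel meridians actually form a nonsplit link, which they do not on their own — two parallel meridians cobound an annulus and are a \emph{split} pair, being two unknots in disjoint balls. This is the genuine obstacle: I would need to distinguish the homology class of the essential curves and argue that if both components carry curves of the same slope $(p,q)$ with $q \neq 0$ (longitudinally essential), the two parallel torus curves form a nonsplit link, whereas if the shared slope is a meridian I must instead find the link by combining a component's curve with the complementary structure. A clean way to avoid the degenerate subcase is to note that a meridian-parallel curve together with \emph{any} longitudinally essential curve on $T^2$ forms a Hopf link, so the only way to avoid a nonsplit link entirely is for every essential cycle in every component to be a meridian — but then capping those meridians off by meridian disks of $V$ exhibits each such component as lying in a punctured sphere, hence trivial, collapsing back into the disk case.

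Thus the structure of the argument is: reduce to the standard torus, classify essential cycles by their slope on $T^2$, and show that the presence of essential cycles in two or more components (with at least one longitudinally essential) forces a nonsplit link, while the absence of such cycles forces all but possibly one component to lie in disks and hence be trivial. I would expect to lean on the Jordan curve theorem on $T^2$ for the disk-confinement step and on the standard classification of simple closed curves up to isotopy on the torus for the slope analysis.
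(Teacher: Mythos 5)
Your overall strategy (classifying the slopes of essential cycles, using that disjoint essential simple closed curves on $T^2$ are parallel, and capping off a degenerate slope by disks) is close in spirit to the paper's proof, but the central linking claim contains a genuine error. You assert that a pair of parallel $(p,q)$-curves on the standard torus is a nonsplit link whenever the slope is nonmeridional (later sharpened to: whenever the curves are ``longitudinally essential''). This is false for the preferred longitude: two parallel preferred longitudes of $T^2$ are meridians of the complementary solid torus $W$, so they bound disjoint disks in $W$ and form a split (indeed trivial) two-component link. In general, the linking number of two parallel copies of a $(p,q)$-curve on the standard torus is the surface framing $pq$, which vanishes precisely when the slope is a meridian \emph{or} a preferred longitude; since the standardly embedded torus admits a symmetry exchanging the two complementary solid tori, the longitude is exactly as degenerate as the meridian case you did catch. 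Consequently, in the configuration where two or more components carry preferred-longitude cycles, your proof asserts the first alternative of the lemma (a nonsplit link) where none exists, and therefore fails to establish the second alternative (triviality of the components) in that case. The paper's proof avoids this by excluding both degenerate slopes at once: if the core $c$ of the relevant essential annulus is a meridian \emph{or} a preferred longitude, both components are split and trivial; only when $c$ is neither does one obtain a nonsplit link.

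There is a secondary flaw in your proposed repair. The ``clean way'' rests on the claim that a meridian together with any longitudinally essential curve on $T^2$ forms a Hopf link. But two such curves have nonzero geometric intersection number on the torus, so they are never disjoint and never form a link at all; the claim is vacuous in your setting, and the conclusion you draw from it (that the only degenerate configuration is ``all essential cycles are meridians'') is exactly what lets the all-longitudes case slip through. The correct statement, which also fixes the first gap, is: essential cycles lying in \emph{distinct} components are disjoint, hence parallel, so all components carrying essential cycles share a single slope $s$; if $s$ is neither a meridian nor a preferred longitude, two such cycles have linking number $pq \neq 0$ and give a nonsplit link, while if $s$ is a meridian (respectively a preferred longitude) each such component lies in an essential annulus in $T^2$ that caps off to a sphere by disks in the inner (respectively outer) solid torus, so every component is trivial. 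With that symmetric treatment your argument closes and becomes essentially the paper's proof.
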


\begin{proof}
Take any connected component~$f(G_{i})$ of the embedding~$f(G)$ on the torus~$T^2$. The complement of $f(G_{i})$ \textit{in the torus} (without considering the rest of the spatial graph~$f(G - G_i)$) is a collection of pieces that can be the punctured torus, discs, and essential annuli without boundaries. (An essential annulus contains a simple closed curve that does not bound a disc in the torus.) \\
\indent
In the case that the complement of $f(G_i)$ in $T^2$ includes the punctured torus, $f(G_i)$ is trivial and splits from the other components.\\
\indent
If the complement of $f(G_i)$ in $T^2$ is only a collection of discs, then all other components of $f(G)$ lie in one of those discs and therefore are trivial and the graph is split. ($f(G_i)$ might or might not contain a nonsplit link.)\\
\indent
In the case that the complement of $f(G_i)$ in $T^2$ includes an essential annulus~$A$, it is possible that other components of $G$ are embedded in this annulus. A component $G_j$ might be embedded in the annulus in two ways: Either the complement of $f(G_j)$ in $A$ includes a punctured annulus and therefore $f(G_j)$ is trivial and splits from the rest of the spatial graph $f(G-G_j)$. Or $A - f(G_j)$ contains two annuli. The annulus $A$ has one type of an essential curve $c$ running inside it; $c$ is parallel to the boundary curves of $A$.
In the case that $A - f(G_j)$ contains two annuli, a subgraph of $f(G_j)$ must be deformable to be parallel to $c$. If $c$ is a meridian or a prefered longitude of $T^2$, both components $f(G_i)$ and $f(G_j)$ are split and trivial since the torus is a standard torus. If $c$ is neither a meridian nor a longitude of $T^2$, $f(G_i)$ and $f(G_j)$ are nonsplittably linked.
\end{proof}

\begin{lem}[Bouquet Lemma] \label{bouquet}
The image $\mathcal{B}$ of an embedding $f:B\rightarrow T^{2}\subset  S^3$ of a connected bouquet graph $B$ on the torus $T^2$ either contains a nontrivial knot or is trivial. 
\end{lem}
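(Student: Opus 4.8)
The plan is to analyse $\mathcal{B}$ through the homotopy classes of its loops on the standard Heegaard torus. Write $\mathcal{B}=\ell_1\cup\dots\cup\ell_k$, where each $\ell_i$ is the image of one loop of $B$, an embedded circle on $T^2$ through the single vertex $v$ with $\ell_i\cap\ell_j=\{v\}$ for $i\neq j$. A bouquet contains no two disjoint cycles, so the only cycles of $\mathcal{B}$ are the individual $\ell_i$; hence the single way $\mathcal{B}$ can contain a nontrivial knot is for some $\ell_i$ to be knotted. Each $\ell_i$ is either inessential (null-homotopic on $T^2$) or essential of some slope $(p_i,q_i)$ with $\gcd(p_i,q_i)=1$; in the essential case $\ell_i$ is the $(p_i,q_i)$-torus knot, which is nontrivial precisely when $|p_i|,|q_i|\geq 2$. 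So I may assume that every loop is \emph{unknotted}, i.e. each essential $\ell_i$ has $p_i\in\{0,\pm1\}$ or $q_i\in\{0,\pm1\}$; otherwise $\mathcal{B}$ already contains a nontrivial knot and there is nothing to prove. The task is then to show that such a $\mathcal{B}$ is trivial.

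First I would clear away the degenerate cases. If every loop is inessential, then lifting to the universal cover of $T^2$ shows $\mathcal{B}$ may be isotoped into a disc on $T^2$, so it is trivial. If $\mathcal{B}$ fails to fill $T^2$—that is, some component of $T^2\setminus\mathcal{B}$ is not a disc—then that region contains an essential simple closed curve $c$ disjoint from $\mathcal{B}$, and all essential loops are parallel to $c$. Let $V$ and $W$ be the two solid tori bounded by $T^2$. If $c$ can be chosen to be the meridian of $V$ or of $W$, it bounds a compressing disc $D$, and compressing $T^2$ along $D$ produces a $2$-sphere containing $T^2\setminus c\supseteq\mathcal{B}$, so $\mathcal{B}$ is trivial. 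More generally, whenever some loop $\ell_i$ is itself a meridian or a longitude, it bounds a disc $D_i$ in $V$ or $W$ whose interior meets the rest of $\mathcal{B}$ only at $v$; sweeping $\ell_i$ across $D_i$ shrinks it to a small trivial loop at $v$ which splits off, and I would finish by induction on the number of essential loops.

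The heart of the matter is the case where $\mathcal{B}$ fills $T^2$ and no loop is a meridian or a longitude—for instance two loops of slopes $(1,1)$ and $(1,2)$. These bound no disc in either solid torus, so the torus cannot be compressed in its complement and no loop can be shrunk directly; this is the main obstacle. To attack it I would use the combinatorial constraint forced by the embedding: since $\ell_i\cap\ell_j=\{v\}$, the geometric intersection number $|p_iq_j-p_jq_i|$ is at most $1$, so the distinct essential slopes are pairwise Farey neighbours and therefore lie in a single triangle of the Farey tessellation—at most three slopes, all unknotted. The plan is to leverage this bounded slope data together with the absence of a nonsplit link: I would aim to show that a filling bouquet in which no loop is a meridian or longitude must contain two loops that cannot be separated (a non-splittably linked pair, as happens for the $(1,1),(1,2)$ configuration), and that in the remaining, genuinely reducible situations an ambient isotopy of $S^3$ carries one loop onto a true meridian or longitude, returning us to the shrinking argument above.

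I expect this filling, no-meridian case to be the only real difficulty. The delicate point is verifying that the Farey-triangle restriction on the slopes, combined with the hypothesis that $\mathcal{B}$ contains no nonsplit link, is exactly enough to produce a compressing meridian (equivalently, that any non-reducible filling bouquet with all loops unknotted is forced to contain a non-splittably linked pair of loops), and this is where the interplay of the two Heegaard solid tori with the crossing pattern of the loops at $v$ must be exploited most carefully. Once the reduction to a meridian or longitude loop is in hand, the inductive shrinking argument completes the proof that $\mathcal{B}$ is trivial.
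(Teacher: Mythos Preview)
Your slope classification and the Farey constraint $|p_iq_j-p_jq_i|\le 1$ are exactly what the paper does (it cites this as Rolfsen's exercise~2.7 and arrives at the same list $T(0,1),\,T(1,n),\,T(1,n+1)$ up to symmetry); the paper then simply asserts that such a bouquet is trivial, with no further argument. So your attempt to actually justify that final step goes beyond what the published proof spells out.

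The gap is in how you treat the filling case with no meridian or longitude. You repeatedly invoke ``the hypothesis that $\mathcal{B}$ contains no nonsplit link'' and propose that the $(1,1),(1,2)$-type configuration contains a nonsplittably linked pair of loops. But the lemma carries no link hypothesis---its dichotomy is only ``contains a nontrivial knot or is trivial''---and, as you yourself observe in your first paragraph, a bouquet has no two disjoint cycles and hence can never contain a link. That branch of your argument is therefore vacuous and rules nothing out. The repair is the other idea you already mention but do not commit to: the ambient isotopy \emph{always} works, not only in ``genuinely reducible situations.'' The Dehn twist along the meridian of $V$ extends over $V$ (identity on $W$), and the Dehn twist along the meridian of $W$ extends over $W$ (identity on $V$); each gives an orientation-preserving homeomorphism of $S^3$, hence an ambient isotopy, and together they generate all of $SL_2(\mathbb{Z})$ acting on slopes. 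Since $SL_2(\mathbb{Z})$ is transitive on Farey triangles, any admissible set of essential slopes normalizes to a subset of $\{(1,0),(0,1),(1,1)\}$ by an ambient isotopy preserving $T^2$. After normalization a genuine meridian or longitude is present, and your shrinking-across-a-disc induction finishes the proof. In particular the $(1,1),(1,2)$ bouquet is trivial, not ``linked.''
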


\begin{proof}
A bouquet graph~$\mathcal{B}$ on~$T^2$ that contains no nontrivial knot contains only cycles which all are the unknot by assumption. The unknot on the torus can take the following forms: 
\begin{enumerate}
\item $T(0,0)$ loop that bound a disc in $T^2$ (trivial elements in $\pi_{1}(T^2)$), 
\item $T(0,1)$ meridional loop,
\item $T(1,0)$ longitudinal loop,
\item $T(1,n)$ loop or alternatively $T(n,1)$ loop, $n \geq 1$
\end{enumerate}
\noindent
Loops of type (1) do not contribute to the nontriviality of~$\mathcal{B}$.\\
If~$\mathcal{B}$ has loops of the types~(1),~(2)~and~(3) only, it is trivial.\\
If~$\mathcal{B}$ has loops of type~(4), there are -- beside the loops $T(0,0)$ -- only three types of loops simultaneously embeddable on the torus without self-intersections: $T(0,1), T(1,n)$ and $T(1,n+1)$ (respectively $T(1,0), T(n,1)$ and $T(n+1,1)$). This can easily be confirmed by applying the formula of Rolfsen's exercise~2.7~\cite{Rolfsen}: If two torus knots $T(p,q)$ and $T(p', q')$ intersect in one point transversally, then $p q' - q p' = \pm 1$. Such a bouquet is trivial.
\end{proof}

\begin{thm}[Ozawa and Tsutsumi's freeness criterion~\cite{OzawaTsutsumi}] \label{free} 
An embedding $f:G\rightarrow S^3$ of a graph $G$ is primitive if and only if 
the restriction $f|_{G'}$ is free for all connected subgraphs $G'$ of $G$.
\end{thm}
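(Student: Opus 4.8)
The plan is to reduce the statement to a question about handlebody complements and then settle the two implications separately. First I would observe that a connected subgraph of $G$ is automatically contained in a single connected component of $G$, and that primitivity is imposed component by component; so it suffices to treat $G$ connected. For connected $G$ with spanning tree $T$, the regular neighborhood $V := \nbhd(f(G))$ is a handlebody of genus $\beta_1(G)$ and $f(G)$ is a spine of it. Contracting $T$ replaces this spine by the bouquet $f(G)/f(T)$, which is again a spine of the \emph{same} embedded handlebody $V$; hence $\nbhd(f(G)/f(T)) = V$ up to isotopy and the complement $S^{3}-V$ is unchanged. Since a bouquet is trivial iff its neighborhood handlebody is unknotted iff its complementary handlebody exists (Waldhausen: genus-$g$ Heegaard splittings of $S^{3}$ are standard), I would record the key translation: $f$ is primitive $\iff$ $S^{3}-V$ is a handlebody.

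The forward implication is then short. Assume $S^{3}-\nbhd(f(G)) =: H$ is a handlebody, and let $G'$ be a connected subgraph. Because $\nbhd(f(G')) \subseteq \nbhd(f(G))$, the complement $S^{3}-\nbhd(f(G'))$ is obtained from $H$ by attaching the regular neighborhoods of the edges of $G$ not in $G'$ as $1$-handles along $\bdry H$. Attaching $1$-handles to a handlebody yields a handlebody, so $S^{3}-\nbhd(f(G'))$ is a handlebody and in particular $f|_{G'}$ is free. (Isolated vertices contribute only balls and are harmless.)

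For the converse I would induct on $\beta_1(G)$. The base case $\beta_1(G)=0$ is a tree, whose complement is a genus-$0$ handlebody, so $f$ is primitive. For the inductive step choose an edge $e$ lying on a cycle, so that $G-e$ is connected with $\beta_1(G-e)=\beta_1(G)-1$. Every connected subgraph of $G-e$ is a connected subgraph of $G$, hence free by hypothesis, so by induction $H' := S^{3}-\nbhd(f(G-e))$ is a handlebody. Now $e$ is a properly embedded arc in $H'$, and $S^{3}-\nbhd(f(G)) = H' - \nbhd(e)$. Regluing $\nbhd(e)$ amounts to attaching a $2$-handle to $M := H' - \nbhd(e)$ along the meridian $\mu$ of $e$, and the result is the handlebody $H'$. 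By hypothesis (taking $G'=G$) the manifold $M$ has free fundamental group, and $M$ is irreducible as the complement of an arc in a handlebody in $S^{3}$.

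The crux — and the step I expect to be the main obstacle — is to promote this data to the conclusion that $M$ itself is a handlebody: \emph{if $M$ is irreducible with free $\pi_1$ and attaching a $2$-handle along $\mu \subset \bdry M$ produces a handlebody, then $M$ is a handlebody.} Here I would invoke a handle-addition argument in the spirit of Jaco's Handle Addition Lemma (with the Casson--Gordon and Przytycki refinements): the hypothesis that the $2$-handle attachment is $\bdry$-reducible (indeed a handlebody) forces $\bdry M$ to compress in a controlled way, and combined with irreducibility and freeness of $\pi_1(M)$ this identifies $M$ as a handlebody. At the group level the relation $\pi_1(H') = \pi_1(M)/\langle\langle\mu\rangle\rangle$ drops the free rank by exactly one, which pins $\mu$ down as a primitive element and is heuristically what makes the geometric handle-addition input applicable. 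Once $M = S^{3}-\nbhd(f(G))$ is shown to be a handlebody, the key translation of the first paragraph gives that $f$ is primitive, completing the induction. The two delicate points are the complement-invariance under tree contraction and, above all, this handle-addition step, where all the $3$-manifold topology is concentrated.
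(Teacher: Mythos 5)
The paper does not prove this statement at all---it is imported verbatim from Ozawa and Tsutsumi~\cite{OzawaTsutsumi}---so there is no internal proof to compare against; judged on its own merits, your proposal contains a fatal error. The ``key translation'' on which everything rests, namely that $f$ is primitive if and only if $S^{3}-N(f(G))$ is a handlebody, is false. Only the implication ``primitive $\Rightarrow$ handlebody complement'' holds. For the other direction, take $G$ a $\theta$-graph and $f(G)=K\cup\tau$, where $K$ is a trefoil and $\tau$ an unknotting tunnel for $K$ (the trefoil has tunnel number one). Then $S^{3}-N(f(G))$ is a genus-two handlebody, yet $f$ is \emph{not} primitive: the spanning trees of a $\theta$-graph are its single edges, and contracting the edge given by one arc of $K$ produces a bouquet one of whose loops has the knot type of $K$, so that bouquet cannot lie on a $2$-sphere. (Consistently, the theorem's own criterion detects this: the cycle $K$ is a connected subgraph whose complement, the trefoil exterior, has non-free fundamental group.) The root error is your sentence ``a bouquet is trivial iff its neighborhood handlebody is unknotted'': an unknotted handlebody has spines that are nontrivial spatial graphs---the example just given, suitably contracted, is one---so the ambient isotopy class of a spine is \emph{not} determined by its neighborhood. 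This is precisely why primitivity is defined by quantifying over \emph{all} spanning trees; your translation silently collapses that quantifier into a single condition on the neighborhood.

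The error then propagates through both implications. In your forward direction, the assertion that $S^{3}-N(f(G'))$ is obtained from $H$ by attaching $1$-handles is wrong: the neighborhood of a deleted edge $e$ is glued to $H$ along its \emph{lateral annulus}, i.e.\ deleting an edge attaches a $2$-handle along the meridian of $e$, and $2$-handle attachment can destroy the handlebody property (attaching one to the handlebody $S^{3}-N(K\cup\tau)$ along the meridian of $\tau$ yields the trefoil exterior). Indeed the intermediate statement you set out to prove there---``handlebody complement implies all connected subgraphs free''---is false, by the same example. In your converse direction, even if the Jaco-style handle-addition crux were established, the final step deduces primitivity from the handlebody conclusion via the false direction of the translation, so the induction proves the wrong statement (note also that there you correctly call the edge restoration a $2$-handle attachment, contradicting your forward direction). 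A workable route is different in kind: verify that contracting a spanning tree, and absorbing a tree meeting a subgraph in a single vertex, change neither regular neighborhoods nor complements; then the subgraphs of the bouquet $f(G)/f(T)$---all of which are connected---have the same complements as the connected subgraphs of $G$ containing $T$, and applying Scharlemann--Thompson's criterion (Theorem~\ref{plane}) to each bouquet $f(G)/f(T)$ yields both implications, with the quantifier over all spanning trees doing exactly the work your translation tried to avoid.
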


\begin{thm}[Scharlemann and Thompson's planarity criterion~\cite{ScharlemannThompson}] \label{plane} 
An embedding $f:G\rightarrow S^3$ of a graph $G$ is trivial if and only if\\
(a) $G$ is planar and \\
(b) for every subgraph $G_s \subset G$, the restriction $f|_{G_s}$  is free.
\end{thm}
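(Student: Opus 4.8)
The statement is a biconditional, and I would treat the two implications separately. The forward implication is routine: if $f$ is trivial then $f(G)$ lies in an embedded $2$-sphere, so $G$ is planar, which is (a); and for any subgraph $G'\subset G$ the image $f(G')$ lies in that same $2$-sphere, whence a regular neighborhood $\nbhd(f(G'))$ and its complement sit in a standard picture in which $S^3-\Int\nbhd(f(G'))$ is a handlebody and $\pi_1(S^3-f(G'))$ is therefore free, giving (b). All the content is in the converse: assuming (a) and (b), I must manufacture a $2$-sphere containing $f(G)$.

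The plan is to reduce first to $G$ connected, the freeness hypothesis applied to possibly disconnected subgraphs being what controls splitting and linking between components; the honest work is the connected case. For $G$ connected, set $g=\beta_1(G)$, take $\nbhd(f(G))$ to be a regular neighborhood (a genus-$g$ handlebody with spine $f(G)$), and let $C=S^3-\Int\nbhd(f(G))$, a compact orientable $3$-manifold with $\partial C$ a closed genus-$g$ surface. The crux equivalence I would establish is that \emph{$f$ is trivial if and only if $C$ is a handlebody}: one direction is the neighborhood computation above, and for the other, if $C$ is a handlebody then $\nbhd(f(G))\cup_{\partial}C$ is a genus-$g$ Heegaard splitting of $S^3$, hence standard by Waldhausen's theorem, so the spine $f(G)$ is isotopic into a $2$-sphere. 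The goal thus reduces to showing that (a) and (b) force $C$ to be a handlebody.

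I would prove this by induction on $g=\beta_1(G)$. The base case $g=0$ is a forest, whose every embedding is trivial. For $g\geq 1$ the hypotheses are hereditary, since subgraphs of a planar graph are planar and a subgraph of a subgraph is again a subgraph of $G$; so every subgraph of $G$ again satisfies (a) and (b). Choosing an edge $e$ on a cycle (a non-bridge, which exists as $g\geq 1$), the graph $G-e$ is connected with $\beta_1=g-1$ and satisfies the hypotheses, so by induction $f|_{G-e}$ is trivial and its complement $C'$ is a genus-$(g-1)$ handlebody. Reinserting $e$ realizes $C=C'-\Int\nbhd(e)$ for a properly embedded arc $e$ in the handlebody $C'$, and it remains to show that $e$ is boundary-parallel in $C'$, for then $C$ is again a handlebody.

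This last step is where I expect the real difficulty, and where the full force of (b) at $G'=G$ enters. A properly embedded arc in a handlebody is boundary-parallel precisely when drilling it out leaves a handlebody, and I would detect boundary-parallelism from the fundamental group: here $\pi_1(C)=\pi_1(S^3-f(G))$ is free of rank $g$ by hypothesis, while $\pi_1(C')$ is free of rank $g-1$, and I would convert this freeness into an embedded compressing disc realizing the isotopy of $e$ into $\partial C'$ by way of the loop theorem and Dehn's lemma. The obstacle is exactly that the passage from \emph{free fundamental group} to \emph{geometric triviality of the drilled arc} is not formal but rests on Papakyriakopoulos-style arguments. This is also the step that separates the theorem's hypotheses from the merely \emph{almost trivial} condition studied elsewhere in this paper: a minimally knotted graph still has each $f|_{G-e}$ trivial, so the inductive hypothesis survives, yet it fails (b) at $G'=G$, its complement is not free, and $e$ cannot be unknotted, consistent with $f$ being nontrivial. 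Making the arc-triviality lemma precise, and then reassembling the disconnected case from its components, would complete the argument.
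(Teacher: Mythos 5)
Your forward implication and the reduction skeleton are fine, but the two bridging equivalences on which your converse rests are both false, and they fail for the same reason. First, ``$f$ is trivial if and only if $C=S^3-\Int N(f(G))$ is a handlebody'' is wrong in the direction you need: Waldhausen's theorem makes the \emph{handlebody} $N(f(G))$ isotopic to a standard one, but a handlebody does not determine its spine up to isotopy --- distinct spines are related by edge slides, which change the spatial graph --- so standardness of $N(f(G))$ does not transport to $f(G)$. Second, ``a properly embedded arc in a handlebody is boundary-parallel precisely when drilling it out leaves a handlebody'' is exactly the statement refuted by unknotting tunnels. A single concrete example kills both, and it even lives on the standard torus studied in this paper: take the $\theta$-graph embedded on $T^2$ whose three constituent cycles are the curves $T(1,1)$, $T(1,2)$ and $T(2,3)$ (these are pairwise Farey neighbors, so they coexist as a $\theta$-graph with one disc face). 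Its complement is two solid tori glued along a disc, i.e.\ a genus-$2$ handlebody, so $\pi_1(S^3-f(G))$ is free of rank $2$; removing the appropriate edge $e$ leaves an unknotted cycle, so $f|_{G-e}$ is trivial and $C'$ is a solid torus. Thus \emph{every} hypothesis your inductive step actually invokes --- $G-e$ trivial, $\pi_1(C)$ free, $\pi_1(C')$ free --- is satisfied, yet $G$ contains a trefoil and is nontrivial. No amount of loop theorem or Dehn's lemma can close your induction, because the hypotheses you feed it do not distinguish this example from a trivial one; the missing input is condition (b) at the \emph{proper} subgraphs (here it fails precisely at the trefoil cycle $T(2,3)$), not, as you say, at $G'=G$. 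Note also that even if boundary-parallelism of $e$ in $C'$ were established, it would only isotope $G$ onto the surface $\partial N(f(G-e))$ --- a torus when $g=2$ --- not into a sphere, and graphs on a torus can be knotted, so your final step would still not yield triviality.

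For comparison: the paper does not prove this theorem at all --- it is imported verbatim from Scharlemann and Thompson~\cite{ScharlemannThompson} --- and the actual proof there is substantially deeper than Papakyriakopoulos-style arguments. Scharlemann and Thompson do run an induction over edges broadly like yours, but the heart of their argument uses Gabai's sutured manifold theory to exploit freeness of all subgraph complements simultaneously when reattaching the edge; this is precisely the machinery needed to rule out the unknotting-tunnel phenomenon above. So the gap in your proposal is not a technical lemma left ``to be made precise'': the arc-triviality step as you have framed it is false, and repairing it requires both a stronger inductive hypothesis and genuinely different tools.
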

\vskip 0.3cm
\subsection{The proof}

We are now ready to prove Theorem~\ref{goal} and Corollary~\ref{Cor}:

\begin{proof} (of Theorem~\ref{goal}).
It follows from Lemma~\ref{nonStandard} that the statement of Theorem~\ref{goal} is true for nonstandardly embedded tori. Therefore, we assume that $\mathcal{G}$ is embedded in the standard torus~$T^2$. Since $\mathcal{G}$ contains no nonsplit link by assumption, we can assume by Lemma~\ref{conn} that $G$ is connected. Any connected spatial graph contracts to a spatial bouquet graph~$\mathcal{B}$ if a spanning tree~$T$ is contracted in~$ S^3$. If the spatial graph is embedded in a surface, edge contractions can be realised in the surface. It follows that contracting a spanning tree of a connected spatial graph that is embedded in~$T^2$ results in a bouquet graph that is embedded in $T^2$ itself. Since $\mathcal{G}$ contains no nontrivial knot by assumption, $\mathcal{B}$ also contains no nontrivial knot. We know from Lemma~\ref{bouquet} that a bouquet graph that is embedded in the torus~$T^2$ and that contains no nontrivial knot is trivial. Therefore it follows that, for any chosen spanning tree $T$ of $G$, the bouquet graph~$\mathcal{B}=f(G)/f(T)$ which is obtained from $f(G)$ by contracting all edges of $f(T)$ in $S^3$ is trivial. Consequently $f$ is primitive by definition. By Theorem~\ref{free}, the restriction $f|_{G'}$ is free for all connected subgraphs $G'$ of $G$. Let $G''$ be a subgraph of $G$ that is not connected. Since $G''$ is a subgraph of $G$,   it does neither contain nontrivial links nor nontrivial knots by assumption. Applying Lemma~\ref{conn} to $G''$ shows that the connected components of $f|_{G''}$ are split and at most one connected component $f|_{G''_1}$ of $f|_{G''}$ is not trivial. Therefore, the restriction $f|_{G''}$ is free if and only if $f|_{G''_1}$ is free. Since $G''_1$ is a connected subgraph of $G$, we know already that $f|_{G''_1}$ is free. Therefore, the restriction $f|_{G_s}$ is free for all subgraphs $G_s$ of $G$. As~$G$ is planar by assumption, it follows from Theorem~\ref{plane} that $f$ is trivial. 
\end{proof}

\begin{proof}(of Corollary~\ref{Cor}).
As there exists no pair of disjoint cycles in a $\theta_{n}$-graph, such a graph does not contain a nontrivial link. Since $\theta_{n}$-graphs are planar, the statement of the corollary follows directly from Theorem~\ref{goal}.
\end{proof}
It has been shown in~\cite{Hyde} together with~\cite{chirality} that every nontrivial embedding of a simple 3-connected spatial graph on the torus that contains a nontrivial knot or a nonsplit link is chiral. The following remark is therefore a consequence of Theorem~\ref{goal}.
\begin{Rem}[Chirality]
Nontrivial embeddings of simple 3-connected planar graphs in the torus are chiral.
\end{Rem}

\vskip 0.3cm
\subsection{All assumptions that have been made are necessary.}\hfill\\
This can be seen by considering the following examples:
\begin{itemize}
\item 
There exist nontrivial embeddedings on $T^2$ that contain neither a nontrivial knot nor a nonsplit link.\\
These are embeddings of nonplanar graphs.\\
\textit{Examples}: $K_{3,3}$ and $K_5$ embedded as shown left in Fig.~1.
\item
There exist nontrivial embeddings of planar graphs that contain neither a nontrivial knot nor a nonsplit link.\\
These embeddings are not embedded on the torus.\\
\textit{Examples}: Kinoshita-theta curve (middle in Fig.~1) and every ravel.
\item
There exist nontrivial embeddings of planar graphs on $T^2$.\\
\textit{Examples}: Spatial graphs that are subdivisions of nontrivial torus knots with $n>0$ vertices and $n$ edges (right in Fig.~1).
\end{itemize}
\begin{figure}[h]
	\centering
	\def\svgwidth{350pt}
	 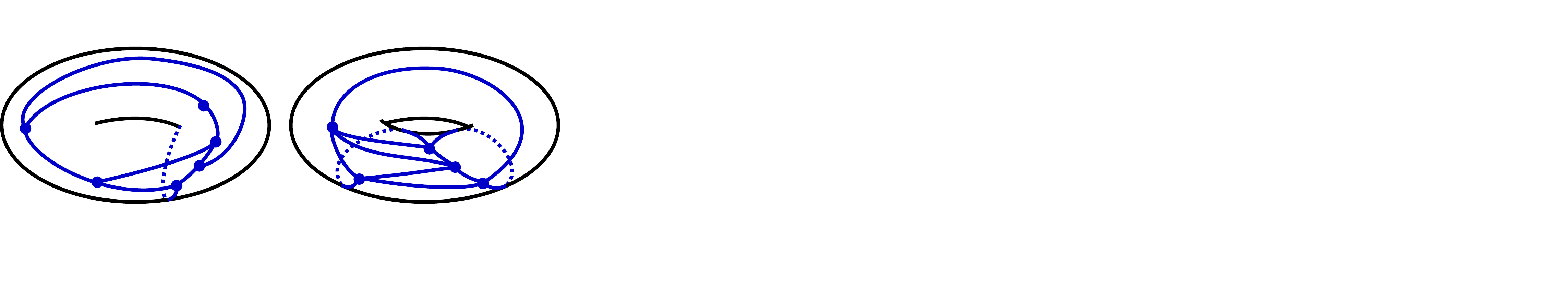
\vspace*{-8pt}
\caption{All assumptions are necessary.\label{1}}
\end{figure}

\vskip 0.6cm
\section*{Acknowledgments}
I thank Tom Coates, Erica Flapan, Youngsik Huh, Stephen Hyde, Danielle O'Donnol, Makoto Ozawa, Matt Rathbun and Kouki Taniyama for helpful comments and discussions. I also want to thank my PhD supervisor Dorothy Buck under whose supervision the research was undertaken. It was financially supported by the Roth studentship of Imperial College London mathematics department, the DAAD, the Evangelisches Studienwerk, the Doris Chen award, and by a JSPS grant awarded to Kouki Taniyama.
\vskip 1cm

\end{document}